\documentclass[12pt]{amsart}

\usepackage{amsfonts,amsthm,amsmath,amssymb,amscd,mathrsfs,tikz}
\usepackage{graphics}
\usepackage{indentfirst}
\usepackage{cite}
\usepackage{latexsym}
\usepackage[dvips]{epsfig}
\usepackage{color}
\usepackage{enumerate}
\usepackage{verbatim}
\usetikzlibrary{patterns}

\usepackage{hyperref}

\setlength{\paperheight}{11in}
\setlength{\paperwidth}{8.5in}
\addtolength{\voffset}{-0.25in}
\addtolength{\hoffset}{-0.75in}
\setlength{\textwidth}{6.5in}
\setlength{\textheight}{8.35in}
\setlength{\footskip}{36pt}
\setlength{\marginparsep}{0pt}
\setlength{\marginparwidth}{0in}
\setlength{\headheight}{8pt}
\setlength{\headsep}{20pt}
\setlength{\oddsidemargin}{0.75in}
\setlength{\evensidemargin}{0.75in}


\newtheorem{theorem}{Theorem}[section]
\newtheorem{remark}{Remark}[section]

\newtheorem{lemma}{Lemma}[section]

\newtheorem{corollary}[lemma]{Corollary}

\renewcommand{\div}{{\rm div\, }}
\newcommand{\curl}{{\operatorname{curl}\, }}
\newcommand{\p}{\partial}
\newcommand{\R}{\mathbb{R}}
\renewcommand{\r}{\rho}
\newcommand{\te}{\theta}

\newcommand{\vp}{\varphi}

\newcommand{\nb}{\nabla}
\newcommand{\B}{\mathbf{B}}
\newcommand{\bu}{\mathbf{u}}
\newcommand{\bn}{\mathbf{n}}
\def\e{\mathbf{e}}

\def\bw{\mathbf{w}}
\def\bU{\mathbf{U}}
\def\bt{\boldsymbol{\tau}}
\def\bb{\mathbf {b}}

\begin{document}

\title[On axisymmetric self-similar solutions to the MHD system]{On axisymmetric self-similar solutions to the MHD system}

\author{Shaoheng Zhang} 
\address{School of Mathematical Sciences, Soochow University, Suzhou, 215006, China}
\email{20234007008@stu.suda.edu.cn}

\subjclass[2020]{35Q35, 76W05}
\keywords{Magnetohydrodynamic system, Landau solution, axisymmetric vector field}

\begin{abstract}
Let $(\mathbf{u},\mathbf{B})$ be an axisymmetric self-similar solution to the stationary MHD equations with magnetic diffusion, of the form  
$\mathbf{u}=u^r(r,z)\mathbf{e}_{r}+u^{\theta}(r,z)\mathbf{e}_{\theta}+u^z(r,z)\mathbf{e}_{z}$ and $\mathbf{B}=B^{\theta}(r,z)\mathbf{e}_{\theta}$ in cylindrical coordinates $(r,\theta,z)$, where $(\mathbf{e}_r,\mathbf{e}_\theta,\mathbf{e}_z)$ is the orthonormal basis.  
Under the assumption that $u^r < \frac{1}{3r} + \frac{2r}{3}$ on the unit sphere and on its intersection with the half-space, respectively, we prove two main results. First, for the domain $\mathbb{R}^3\setminus\{0\}$, the velocity field $\mathbf{u}$ must be a Landau solution and the magnetic field $\mathbf{B} \equiv 0$. Second, in the half-space $\mathbb{R}^3_+$ with either the no-slip or Navier slip boundary condition, we establish that all such axisymmetric self-similar solutions are trivial, i.\,e., $\mathbf{u}=\mathbf{B}=0$.


\end{abstract}

\maketitle

\section{Introduction}

We investigate the stationary incompressible magnetohydrodynamic (MHD) system with magnetic diffusion in a domain $\Omega \subseteq \mathbb{R}^3$:
\begin{align}\label{MHD}
\begin{cases}
-\Delta \mathbf{u} + (\mathbf{u} \cdot \nabla)\mathbf{u} - (\mathbf{B} \cdot \nabla)\mathbf{B} + \nabla p = 0, \\
-\Delta \mathbf{B} + (\mathbf{u} \cdot \nabla) \mathbf{B} - (\mathbf{B} \cdot \nabla) \mathbf{u} = 0, \\
\nabla \cdot \mathbf{u} = \nabla \cdot \mathbf{B} = 0,
\end{cases}
\end{align}
where $\mathbf{u}$, $\mathbf{B}$, and $p$ represent the fluid velocity, magnetic field, and scalar pressure, respectively. This system models the dynamics of electrically conducting fluids (e. g., plasmas) by coupling the Navier--Stokes equations for fluid flow with Maxwell's equations for electromagnetism.

The MHD system \eqref{MHD} is invariant under the following scaling:
\begin{align*}
    \bu(x)\to \bu_{\lambda}(x)=\lambda \bu(\lambda x),\ \
    \B(x) \to \B_{\lambda}(x)=\lambda \B(\lambda x),\ \
    p(x)\to p_{\lambda}(x)=\lambda^2 p(\lambda x),
\end{align*}
where $\lambda>0$.
The solution $(\bu,\B,p)$ is called {\it self-similar} if 
\[
\bu(x)=\bu_{\lambda}(x), \quad \B(x)=\B_{\lambda}(x),\quad p(x)=p_{\lambda}(x)
\] 
for any $\lambda>0$. The goal of this paper is to characterize these self-similar solutions in the axisymmetric case (see Section~\ref{Sect2.1} for the definition of axisymmetric vector fields).

A fundamental special case occurs when $\mathbf{B} \equiv 0$, in which the MHD system \eqref{MHD} reduces to the Navier--Stokes equations.
The well-known self-similar solutions to the Navier--Stokes equations in $\R^3\setminus\{0\}$ are the {\it Landau solutions} (see \cite{Landau44}), which are axisymmetric with no swirl and have exactly one point singularity at the origin.
Recall that a function $f$ is called $(-1)$-homogeneous if $f(\lambda x)=\lambda^{-1}f(x)$ for all $\lambda>0$, and $(-2)$-homogeneous if $f(\lambda x)=\lambda^{-2}f(x)$. The Landau solutions
can be parameterized in the following way: for any $0 \neq \bb \in \R^3$, there exists a unique $(-1)$-homogeneous $\bU^\bb$ and a corresponding $(-2)$-homogeneous $P^\bb$ such that $(\bU^\bb,P^\bb)$ are smooth in $\R^3\setminus \{0\}$ and they solve
\[
-\Delta \bU^\bb + \bU^\bb \cdot \nabla \bU^\bb + \nb P^\bb =\bb\delta, 
\quad \nabla \cdot \bU^\bb = 0
\]
in the distributional sense on $\R^3$, where $\delta$ denotes the Dirac delta function.
When $\bb=(0,0,\beta)$ with $\beta>0$, $(\bU^\bb,P^\bb)$ has the explicit form
\begin{align*}
    \bU^\bb=\frac{2}{\rho} 
    \big(\frac{a^2-1}{(a-\cos \vp)^2}-1\big)\e_{\rho}
    -\frac{2\sin \vp}{\r(a-\cos \vp)}\e_{\vp},
    \quad
    P^\bb=\frac{4(a \cos \vp-1)}{\rho^2 (a-\cos \vp)^2}
\end{align*}
in spherical coordinates $(\r,\te,\vp)$ with $(x_1, x_2, x_3) =(\r\sin\vp\cos \te,\r\sin\vp\sin \te, \r \cos \vp)$,
and $\mathbf{e}_{\rho}=\frac{x}{\rho},\mathbf{e}_{\theta}=(-\sin \theta, \cos \theta,0), \mathbf{e}_{\varphi}=\mathbf{e}_{\theta}\times \mathbf{e}_{\rho}$.
The parameters $\beta>0$ and $a > 1$ are related by $\beta=16 \pi 
\big(a + \frac 12 a^2 \log \left(\frac {a-1}{a+1}\right) + \frac{4a}{3(a^2-1)}\big)$ (see also \cite[Section 8.2]{Tsai18}).
The characterization of self-similar solutions in $\R^3\setminus\{0\}$ as Landau solutions under axisymmetry was established by Tian and Xin \cite{TX98} and, separately, by Cannone and Karch \cite{CK04}.
\v{S}ver\'ak established the following result in \cite{Sverak11}:
\begin{theorem}
All $(-1)$-homogeneous nontrivial solutions to the Navier--Stokes equations in $C^{\infty}(\R^3 \setminus \{0\})$ are Landau solutions.
\end{theorem} 
It was shown by Kang, Miura, and Tsai \cite{KMT18} that all axisymmetric self-similar solutions to the Navier--Stokes equations in $\R^3_+$ are trivial, subject to either the no-slip or Navier slip boundary condition.

Regarding the self-similar solutions to the stationary MHD equations, fewer results are known. 
The structure of axisymmetric solutions to the stationary MHD equations with $\mathbf{B} = B^{\theta} \mathbf{e}_{\theta}$ has been characterized under different assumptions. In the case with magnetic diffusion, Zhang, Wang, and Wang \cite{ZWW25} proved that $\mathbf{u}$ must be a Landau solution and $\mathbf{B}$ vanishes, given that $|\mathbf{u}(x)|$ is sufficiently small on the unit sphere. For the case without magnetic diffusion, Zhang \cite{Zhang25} proved that the same conclusion holds unconditionally. 


This paper is concerned with axisymmetric self-similar solutions to the MHD system with magnetic diffusion ---namely, \eqref{MHD}---and our first result states
\begin{theorem}\label{thm1}
Suppose $(\bu,\B)$ is a smooth axisymmetric self-similar solution to \eqref{MHD} in $\R^3\setminus\{0\}$ of the form $\bu=u^{r}(r,z)\e_r+u^{\te}(r,z)\e_{\te}+u^{z}(r,z)\e_z$ and $\B=B^{\te}(r,z)\e_\te$, where $(r,\te,z)$ are the cylindrical coordinates defined in Section \ref{Sect2.1}. 
If the inequality $u^{r}(r,z) < \frac{1}{3r}+\frac{2r}{3}$ holds on the unit sphere $\partial B_1$, then $\mathbf{u}$ must be a Landau solution and $\mathbf{B} \equiv 0$.
\end{theorem}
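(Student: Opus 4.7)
My plan is to prove the theorem in two stages. The main stage is to show $\B \equiv 0$; once this is established, the momentum equation reduces to the stationary Navier--Stokes system, and the axisymmetric self-similar classification of Tian--Xin \cite{TX98} (or equivalently Cannone--Karch \cite{CK04}) forces $\bu$ to be a Landau solution. To start, I project the induction equation onto $\e_\te$. Using $\B = B^\te \e_\te$ and the axisymmetry of $\bu$, all non-$\te$ components of $(\bu \cdot \nb)\B - (\B \cdot \nb)\bu$ cancel, and the $\te$-component together with $-\Delta(B^\te \e_\te)\cdot \e_\te = -\Delta B^\te + B^\te/r^2$ produces the scalar equation
\[
-\Delta B^\te + \frac{B^\te}{r^2} + u^r \p_r B^\te + u^z \p_z B^\te - \frac{u^r B^\te}{r} = 0.
\]

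I then introduce the $0$-homogeneous auxiliary $\Gamma := r B^\te$. A direct computation rewrites the scalar equation as
\[
-(\p_r^2 + \p_z^2)\Gamma + \frac{\p_r \Gamma}{r} + u^r \p_r \Gamma + u^z \p_z \Gamma - \frac{2 u^r}{r}\Gamma = 0,
\]
which, restricted to $\p B_1$ and expressed in the spherical polar angle $\vartheta$ (so $r = \sin\vartheta$), reduces to an ODE on $(0,\pi)$ for $\gamma(\vartheta) := \Gamma|_{\p B_1}$:
\[
\gamma'' - (\cot\vartheta + V)\gamma' + 2(U + V\cot\vartheta)\gamma = 0,
\]
where $U(\vartheta)$ and $V(\vartheta)$ are the spherical radial and spherical polar components of $\bu|_{\p B_1}$. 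Smoothness of $\B$ on the $z$-axis forces $\gamma = O(\sin^2\vartheta)$ near the poles, so $\gamma(0) = \gamma(\pi) = 0$. Multiplying this ODE by $\gamma$, integrating over $(0,\pi)$ with respect to $d\vartheta$, and integrating by parts---using the spherical form of $\div \bu = 0$, namely $U + V' + V\cot\vartheta = 0$, to eliminate every $V'$-contribution---will produce the energy identity
\[
2\int_0^\pi (\gamma')^2\, d\vartheta + \int_0^\pi \frac{\gamma^2}{\sin^2\vartheta}\, d\vartheta = 3\int_0^\pi \frac{u^r|_{\p B_1}(\vartheta)}{\sin\vartheta}\, \gamma^2\, d\vartheta.
\]

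To conclude, I substitute the hypothesis: with $r = \sin\vartheta$, the bound $u^r < \tfrac{1}{3r} + \tfrac{2r}{3}$ rearranges precisely to $3 u^r/\sin\vartheta < 1/\sin^2\vartheta + 2$, and plugging this into the identity produces the \emph{strict} inequality $\int_0^\pi (\gamma')^2\, d\vartheta < \int_0^\pi \gamma^2\, d\vartheta$ whenever $\gamma \not\equiv 0$. This contradicts the sharp Dirichlet Poincar\'e inequality $\int_0^\pi (\gamma')^2\, d\vartheta \ge \int_0^\pi \gamma^2\, d\vartheta$ on $(0,\pi)$ (optimal constant equal to the first eigenvalue $1$, attained by $\sin\vartheta$), so $\gamma \equiv 0$ and hence $B^\te \equiv 0$. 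The principal obstacle is finding the particular combination that yields this exact identity: the substitution $\Gamma = r B^\te$, the test function $\gamma$ paired with the Lebesgue measure $d\vartheta$ (rather than the spherical area element $\sin\vartheta\, d\vartheta$), and the use of incompressibility to cancel every $V'$-term are all dictated by the need for the coefficients on the left-hand side to be exactly $1$ and $2$ and on the right-hand side exactly $3$---precisely the arithmetic that makes the numerical threshold $\tfrac{1}{3r} + \tfrac{2r}{3}$ match the first Dirichlet eigenvalue on $(0,\pi)$.
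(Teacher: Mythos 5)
Your argument is correct, and after unwinding the change of variables it is the same energy estimate as the paper's, just written on the sphere instead of on annuli. The paper multiplies the $B^\te$-equation by $rB^\te$, integrates over $B_{\rho_2}\setminus \bar B_{\rho_1}$, uses homogeneity to kill the boundary terms, and reduces by the coarea formula to
\[
\int_{\p B_1}|B^{\te}|^{2}\Bigl(r+\tfrac{1}{2r}-\tfrac{3}{2}u^{r}\Bigr)\le 0 ,
\]
after discarding the nonnegative quantity $r|\nb B^\te|^2-\tfrac{r}{|x|^2}|B^\te|^2$. Writing $B^\te|_{\p B_1}=b(\vartheta)$ and $\gamma=\sin\vartheta\, b$, one checks that $\int_0^\pi(\gamma')^2\,d\vartheta=\int_0^\pi \sin^2\vartheta\bigl((b')^2+b^2\bigr)\,d\vartheta$, so your identity $2\int(\gamma')^2+\int\gamma^2/\sin^2\vartheta=3\int (u^r/\sin\vartheta)\gamma^2$ is exactly the paper's pre-coarea identity, and your invocation of the sharp Poincar\'e inequality on $(0,\pi)$, $\int(\gamma')^2\ge\int\gamma^2$, is \emph{identically} the paper's step of dropping the tangential gradient, since $\int_0^\pi\bigl((\gamma')^2-\gamma^2\bigr)\,d\vartheta=\int_0^\pi\sin^2\vartheta\,(b')^2\,d\vartheta\ge0$. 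The only thing your packaging buys is a cleaner explanation of why the threshold $\tfrac{1}{3r}+\tfrac{2r}{3}$ is the natural one (it is calibrated to the first Dirichlet eigenvalue $1$ on $(0,\pi)$); the only thing it costs is a little extra care at the poles, which you correctly supply via $\gamma=O(\sin^2\vartheta)$, needed both for the $H^1_0$ membership and for the vanishing of the boundary terms in $\int\cot\vartheta\,\gamma'\gamma$ and $\int V\gamma'\gamma$. Your final reduction to the Navier--Stokes classification (Tian--Xin, Cannone--Karch) matches the paper's citation of the same result in Tsai's book.
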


As an immediate consequence of Theorem \ref{thm1}, we obtain the following corollary.

\begin{corollary}\label{cor1}
Suppose $(\bu,\B)$ is a smooth axisymmetric self-similar solution to \eqref{MHD} in $\R^3\setminus\{0\}$ of the form $\bu=u^{r}(r,z)\e_r+u^{\te}(r,z)\e_{\te}+u^{z}(r,z)\e_z$ and $\B=B^{\te}(r,z)\e_\te$, where $(r,\te,z)$ are the cylindrical coordinates defined in Section \ref{Sect2.1}. 
If $u^r(x)\le \alpha$ on $\p B_1$ with $\alpha<\frac{2\sqrt 2}{3}$, then $\bu$ is a Landau solution and $\B\equiv0$.
\end{corollary}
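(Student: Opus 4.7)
The plan is to deduce Corollary \ref{cor1} directly from Theorem \ref{thm1} by showing that the hypothesis $u^{r} \le \alpha$ with $\alpha < \frac{2\sqrt{2}}{3}$ implies the pointwise bound $u^{r}(r,z) < \frac{1}{3r} + \frac{2r}{3}$ on $\partial B_{1}$, thereby making Theorem \ref{thm1} applicable.

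First I would analyze the function $f(r) = \frac{1}{3r} + \frac{2r}{3}$ for $r \in (0,1]$, which is the range of the cylindrical radius on the unit sphere $\partial B_{1}$ (since $r^{2}+z^{2}=1$). Applying the AM-GM inequality gives
\[
f(r) = \frac{1}{3r} + \frac{2r}{3} \ge 2\sqrt{\frac{1}{3r}\cdot\frac{2r}{3}} = \frac{2\sqrt{2}}{3},
\]
with equality precisely when $\frac{1}{3r} = \frac{2r}{3}$, i.e., at $r = \frac{1}{\sqrt{2}}$. Hence $\min_{r \in (0,1]} f(r) = \frac{2\sqrt{2}}{3}$.

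Next I would combine this with the hypothesis. For any $x \in \partial B_{1}$ with $r(x)>0$, we have
\[
u^{r}(x) \le \alpha < \frac{2\sqrt{2}}{3} \le \frac{1}{3r} + \frac{2r}{3},
\]
so the strict inequality $u^{r}(r,z) < \frac{1}{3r} + \frac{2r}{3}$ holds. For points on $\partial B_{1}$ with $r(x)=0$ (i.e., the two poles on the $z$-axis), smoothness of $\bu$ together with the axisymmetry forces $u^{r} = 0$ there, while the right-hand side tends to $+\infty$, so the inequality is trivially satisfied. Thus the hypothesis of Theorem \ref{thm1} holds throughout $\partial B_{1}$, and the conclusion that $\bu$ is a Landau solution and $\B \equiv 0$ follows immediately.

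There is no real obstacle here; the only subtle point worth checking is that the infimum of $f$ on $(0,1]$ is genuinely attained inside the interval (at $r=1/\sqrt{2}$), so that the constant $\frac{2\sqrt{2}}{3}$ in the corollary is sharp relative to the threshold appearing in Theorem \ref{thm1}.
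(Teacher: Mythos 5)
Your proposal is correct and follows essentially the same route as the paper, which simply observes that the infimum of $\frac{1}{3r}+\frac{2r}{3}$ on $\p B_1$ equals $\frac{2\sqrt 2}{3}$ and then invokes Theorem \ref{thm1}. Your version merely fills in the AM--GM computation and the (harmless) check at the poles, which the paper leaves implicit.
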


\begin{remark}
Let $(\mathbf{u},\mathbf{B})$ be a smooth axisymmetric self-similar solution to \eqref{MHD} in $\mathbb{R}^{3}\setminus \{\mathbf 0\}$ with $\mathbf{B} = B^{\theta}(r,z)\mathbf{e}_{\theta}$. 
Theorem \ref{thm1} provides a general condition, $u^{r}< \frac{1}{3r} +\frac{2r}{3}$ on $\partial B_{1}$, under which $\mathbf{u}$ is a Landau solution and $\mathbf{B}\equiv 0$. A special case of this condition is $u^{r}(x)\leq \alpha$ on $\partial B_{1}$ with $\alpha < \frac{2\sqrt{2}}{3}$, as stated in Corollary \ref{cor1}. This corollary is particularly noteworthy because it gives an explicit numerical bound that improves upon a previous result in \cite{ZWW25}, where the same conclusion was obtained under the assumption that $|\mathbf{u}(x)|\leq \varepsilon$ on $\partial B_{1}$ with a sufficiently small $\varepsilon>0$. Thus, Corollary \ref{cor1} demonstrates that controlling only the $u^r$ component with an explicit constant is sufficient, which is a weaker requirement than controlling the full magnitude $|\mathbf{u}|$.
\end{remark}

Our second main result establishes that all axisymmetric self-similar solutions to the MHD system \eqref{MHD} in the half-space $\mathbb{R}^3_+$ are trivial, under either the no-slip or the Navier slip boundary condition.

\begin{theorem}\label{thm2}
Suppose $(\mathbf{u},\mathbf{B})$ is a smooth axisymmetric self-similar solution to \eqref{MHD} in $\R^3_+$ of the form $\bu=u^{r}(r,z)\e_r+u^{\te}(r,z)\e_{\te}+u^{z}(r,z)\e_z$ and $\B=B^{\te}(r,z)\e_\te$, satisfying either the no-slip boundary condition 
\begin{align}\label{noslipbc}
    \bu=0,  \quad \B\cdot \bn= 0, \quad \curl \B \times \bn=0, \quad  &\text{on  }\ \p\R^3_+\setminus\{0\},
\end{align}
or the Navier slip boundary condition \begin{equation}\label{navierslipbc}
\begin{aligned}
    \bu\cdot \bn=0, \quad (\mathbb{D}\bu \cdot \bn)\cdot \bt=0  , \quad\B\cdot \bn= 0, \quad  \curl \B \times \bn=0, \quad  &\text{on  }\ \p\R^3_+\setminus\{0\}.
\end{aligned}
\end{equation} 
Here $\bn$ and $\bt$ are the unit outward normal and tangent vectors on $\p\R^3_+$, respectively, and $\mathbb{D}\bu=\frac{1}{2}(\nb \bu+\nb \bu^T)$  is the  strain tensor.
Then, under the condition that $u^{r}(r,z)< \frac{1}{3r}+\frac{2r}{3}$ on $\p B_1 \cap \R^3_+$, we have $\mathbf{u}=\mathbf{B}= 0$.

\end{theorem}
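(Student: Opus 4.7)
The plan is to reduce Theorem \ref{thm2} to Theorem \ref{thm1} by extending the solution from $\R^3_+$ to $\R^3\setminus\{0\}$ via reflection across $\{z=0\}$, and then to exploit the induced symmetry to force the resulting Landau solution to be trivial.

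I would begin by invoking the reflection invariance of \eqref{MHD}: for $A=\mathrm{diag}(1,1,-1)$, the map $(\bu,\B,p)\mapsto (A\bu(A\,\cdot),A\B(A\,\cdot),p(A\,\cdot))$ preserves the system. In cylindrical coordinates this dictates the extension
\[
(\tilde u^r,\tilde u^\theta,\tilde u^z)(r,z):=(u^r,u^\theta,-u^z)(r,-z),\qquad \tilde B^\theta(r,z):=B^\theta(r,-z)\qquad (z<0),
\]
with $(\tilde\bu,\tilde\B)=(\bu,\B)$ on $\overline{\R^3_+}$. The condition $u^z=0$ on $\p\R^3_+$ (contained in both no-slip and Navier slip) makes $(\tilde\bu,\tilde\B)$ continuous across $\{z=0\}\setminus\{0\}$, while $\curl\B\times\bn=0$ unfolds to $\p_z B^\theta|_{z=0}=0$, giving $C^1$ matching for $\tilde B^\theta$. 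For Navier slip the tangential stress condition further yields $\p_z u^r=\p_z u^\theta=0$ on $\{z=0\}$, so the velocity extension is $C^1$ as well. In both cases the extension is a bounded weak solution of \eqref{MHD} on $\R^3\setminus\{0\}$, and interior regularity for the stationary MHD system upgrades it to a smooth solution.

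Having produced a smooth, axisymmetric, self-similar extension whose $\tilde u^r(r,z)=u^r(r,|z|)$ still satisfies $\tilde u^r<\frac{1}{3r}+\frac{2r}{3}$ on all of $\p B_1$, I would apply Theorem \ref{thm1} to conclude that $\tilde\bu$ is an axisymmetric Landau solution and $\tilde\B\equiv 0$; in particular $\B\equiv 0$ on $\R^3_+$. To force $\tilde\bu$ to vanish, I would exploit the symmetry built into the construction: the axisymmetric Landau solutions form a one-parameter family indexed by the scalar $b\in\R$ that records the point-force strength along $\e_z$, and the reflection $\bv\mapsto A\bv(A\,\cdot)$ sends the Landau solution with parameter $b$ to the one with parameter $-b$. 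Since $\tilde\bu$ is invariant under this reflection by construction, $b=-b$, hence $b=0$ and $\tilde\bu\equiv 0$. Restricting to $\R^3_+$ gives $\bu=0$, completing the proof.

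The main obstacle is establishing smoothness of the extension in the no-slip case, where the boundary data yield only $C^0$ continuity across $\{z=0\}\setminus\{0\}$; one must invoke interior regularity for the stationary MHD system on a bounded weak solution. A convenient formulation is to verify that the reflected fields lie in $H^1_{\mathrm{loc}}\cap L^\infty_{\mathrm{loc}}$ on a neighborhood of any interior point of $\{z=0\}\setminus\{0\}$ and then bootstrap. The remaining steps—the application of Theorem \ref{thm1} and the parameter-reversal argument for Landau solutions—are conceptually direct.
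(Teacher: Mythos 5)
Your route is genuinely different from the paper's (which multiplies \eqref{equationforB} by $rB^{\theta}$, integrates over $(B_{\rho_2}\setminus\bar B_{\rho_1})\cap\R^3_+$, kills the boundary terms on $\{z=0\}$ using $\partial_z B^{\theta}=0$ and $\bu\cdot\bn=0$, and then quotes \cite[Theorem 5.1]{KMT18} for the vanishing of $\bu$), but your reflection argument has a genuine gap in the no-slip case. The extension $\tilde u^r(r,z)=u^r(r,-z)$, $\tilde u^{\theta}(r,z)=u^{\theta}(r,-z)$ is an \emph{even} reflection of the tangential velocity components across a wall where they satisfy a Dirichlet condition. Such an extension is continuous, but its normal derivative jumps: $\partial_z\tilde u^r(r,0^-)=-\partial_z u^r(r,0^+)$, and for a no-slip solution the wall shear $\partial_z u^r(r,0^+)$, $\partial_z u^{\theta}(r,0^+)$ is in general nonzero (no boundary condition forces it to vanish). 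Consequently $\Delta\tilde u^r$ acquires a term proportional to $\partial_z u^r(r,0^+)\,\delta_{\{z=0\}}$, so the reflected field is \emph{not} a distributional solution of \eqref{MHD} across $\{z=0\}\setminus\{0\}$. This is not a regularity issue that interior estimates can repair — the equation itself fails on the interface — so Theorem \ref{thm1} cannot be applied to the extension. (The only reflection compatible with Dirichlet data, the odd one $\tilde u^r=-u^r(r,-z)$, is not a symmetry of the Navier--Stokes nonlinearity, which is exactly why the half-space no-slip problem is treated directly in \cite{KMT18} rather than by reflection.)

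In the Navier slip case your argument is essentially sound: $(\mathbb{D}\bu\cdot\bn)\cdot\bt=0$ together with $u^z=0$ on the wall gives $\partial_z u^r=\partial_z u^{\theta}=0$ there, and $\curl\B\times\bn=0$ gives $\partial_z B^{\theta}=0$, so the reflected fields are $C^1$ across $\{z=0\}\setminus\{0\}$, are weak solutions, and can be bootstrapped to smooth ones; the parameter-reversal argument ($b\mapsto-b$ under the reflection, hence $b=0$) correctly forces the resulting Landau solution to vanish. One further small point to check even there: the hypothesis of Theorem \ref{thm2} gives the strict inequality $u^r<\frac{1}{3r}+\frac{2r}{3}$ only on the open hemisphere $\p B_1\cap\R^3_+$, so after reflection strictness may fail on the equator; this is harmless (the equator has measure zero in $\p B_1$, and $B^{\theta}=0$ there follows by continuity), but it should be said. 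To make the proof complete you would need to replace the reflection step in the no-slip case by a direct boundary computation in the half-space, which is what the paper does.
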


\begin{remark}
As in Corollary \ref{cor1}, the hypothesis $u^{r}(r,z) < \frac{1}{3r}+\frac{2r}{3}$ on $\p B_1 \cap \R^3_+$ in Theorem \ref{thm2} can be replaced by weakened to  $u^r(x) \le \alpha$ on $\p B_1 \cap \R^3_+$ with $\alpha < \frac{2\sqrt{2}}{3}$.
\end{remark}

The paper is organized as follows. In Section 2, we recall some basic definitions. Section 3 is devoted to the proof of Theorems \ref{thm1}--\ref{thm2} and Corollary \ref{cor1}.

\section{Preliminaries}

\subsection{Axisymmetric solutions}\label{Sect2.1}
In this subsection, we introduce the axially symmetric vector fields.
Let $(r,\theta, z)$ denote the cylindrical coordinates in $\mathbb{R}^3$, under which $(x_1, x_2, x_3)$ is represented as $(x_1, x_2, x_3) = (r \cos \theta, r\sin \theta, z)$.
The associated orthonormal basis ${\mathbf{e}_{r}, \mathbf{e}_{\theta}, \mathbf{e}_{z}}$ for this coordinate system is
\begin{align*}
\mathbf{e}_{r}=(\cos \theta, \sin \theta, 0), \quad \mathbf{e}_{\theta}=(-\sin \theta, \cos \theta,0), \quad \mathbf{e}_{z}=(0,0,1).
\end{align*}
In this coordinate system, a vector-valued function $\bw$ admits the representation
\begin{equation*}
    \bw = w^r(r,\te,z) \e_r + w^\te(r,\te,z) \e_\te + w^z(r,\te,z) \e_z, 
\end{equation*}
where $w^\theta$ is known as the swirl component. We say $\mathbf{w}$ is {\it axisymmetric} if it is of the form
\[
\bw = w^r(r,z) \e_r + w^\te(r,z) \e_\te + w^z(r,z) \e_z. 
\]


\section{Proof of the main results}

We first prove Theorems \ref{thm1}--\ref{thm2}. The key step is to prove $B^\theta \equiv 0$, since the MHD equations will reduce to the Navier--Stokes equations if $\B=0$, and the conclusion follows from the known results for the Navier--Stokes equations.

\begin{proof}[Proof of Theorem \ref{thm1}]
A direct computation shows
\begin{equation}\label{equationforB}
-\Delta B^{\te} + (\bu \cdot \nabla) B^{\theta} + \frac{1}{r^2} B^{\theta} - \frac{u^{r} B^{\theta}}{r} = 0.  
\end{equation}
For $0<\rho_1< \rho_2< \infty$, denote $\Omega_{1} = B_{\rho_{2}} \setminus \bar{B}_{\rho_1}$. Multiplying \eqref{equationforB} by $r B^{\theta}$ and integrating over $\Omega_{1}$, we obtain
\begin{align}\label{theorem1-1}
\int_{\Omega_{1}} (-\Delta B^{\te}) r B^{\te} 
+ r \bu \cdot \nabla (\frac12 |B^{\te}|^2)
+ \frac{1}{r} |B^{\te}|^{2} - u^{r} |B^{\te}|^{2} = 0.
\end{align}
For the first term in \eqref{theorem1-1}, we have
\begin{align*}
\int_{\Omega_{1}} -\Delta B^{\theta} (r B^{\theta})
= \int_{\Omega_{1}} r |\nabla B^{\te}|^{2} + \frac12\p_{r} |B^{\theta}|^2 
- \int_{\p \Omega_{1}} \frac{\p B^{\te}}{\p \boldsymbol \nu} r B^{\te}.
\end{align*}
Since $B^\theta$ is $(-1)$-homogeneous, i.\,e., $B^\te(\lambda x)=\lambda^{-1}B^\te(x)$, one finds that
\begin{align}\label{theorem1-2}
    \frac{x}{|x|} \cdot \nabla B^{\theta}(x) = -\frac{B^{\theta}(x)}{|x|}.
\end{align}
Then
\[
\int_{\p \Omega_{1}} \frac{\p B^{\te}}{\p \boldsymbol \nu} r B^{\te} = -\int_{\p B_{\rho_{2}}} \frac{r |B^{\te}|^{2}}{|x|} + \int_{\p  B_{\rho_{1}}} \frac{r |B^{\te}|^{2}}{|x|} = 0,
\]
owing to the self-similarity of $B^\theta$. Hence, 
\[
\int_{\Omega_{1}} -\Delta B^{\theta} (r B^{\theta}) = \int_{\Omega_{1}} r |\nabla B^{\theta}|^{2} + \frac{1}{2} \partial_{r} |B^{\theta}|^{2}.
\]
We claim that
\[
\int_{\Omega_{1}} \partial_{r} |B^{\theta}|^{2} = -\int_{\Omega_{1}} \frac{1}{r} |B^{\theta}|^{2}.
\]
To see this, direct calculation yields 
\begin{align*}
\int_{\Omega_{1}} \p_{r} |B^{\theta}|^{2}
=& \int_{\Omega_{1}} \div (|B^\te|^2 \e_r)-\frac{1}{r}|B^\te|^2\\
=& \int_{\partial \Omega_{1}} |B^{\theta}|^{2} \e_r \cdot \boldsymbol{\nu} - \int_{\Omega_{1}} \frac{1}{r} |B^{\theta}|^{2}\\
=&-\int_{\Omega_{1}} \frac{1}{r} |B^{\theta}|^{2},
\end{align*}
where the third equality is due to the self-similarity of $B^\te$.
Thus we have
\[
\int_{\Omega_{1}} -\Delta B^{\te} (r B^{\te}) = \int_{\Omega_{1}} r |\nabla B^{\te}|^{2} - \frac{1}{2r} |B^{\te}|^{2}.
\]
Integration by parts gives
\[
\int_{\Omega_{1}} r \bu \cdot \nabla (\frac12 |B^{\te}|^2)
=-\int_{\Omega_{1}} \div(r\bu) \frac12 |B^\te|^2 +\int_{\p \Omega_{1}}  \frac{r|B^\te|^2}{2}\bu \cdot \boldsymbol{\nu}
=-\int_{\Omega_{1}}  \frac{u^r}{2} |B^\te|^2,
\]
where we used $\div \bu = 0$ and the fact that both $B^{\theta}$ and $\mathbf{u}$ are $(-1)$-homogeneous.

Therefore, equation \eqref{theorem1-1} becomes
\[
\int_{\Omega_{1}} r|\nabla B^{\theta}|^{2} + |B^{\theta}|^{2}\left(\frac{1}{2r} - \frac{3}{2}u^{r}\right)=0.
\]
From \eqref{theorem1-2}, we obtain
\begin{align}\label{theorem1-3}
|\nabla B^{\theta}(x)| \geq \frac{1}{|x|}|B^{\theta}(x)|,
\end{align}
which gives
\begin{align*}
\int_{\Omega_{1}}|B^{\te}|^{2}\left(\frac{r}{r^{2}+z^{2}} + \frac{1}{2r} - \frac{3}{2}u^{r}\right)\le 0.
\end{align*}
Applying the coarea formula yields
\[
\log\big(\frac{\rho_2}{\rho_1}\big) \int_{\p B_1}|B^{\te}|^{2}\left(r + \frac{1}{2r} - \frac{3}{2}u^{r}\right)
=\int_{\Omega_{1}}|B^{\te}|^{2}\left(\frac{r}{r^{2}+z^{2}} + \frac{1}{2r} - \frac{3}{2}u^{r}\right)\le 0.
\]
Using the hypothesis $u^{r}(r,z) < \frac{1}{3r} + \frac{2r}{3}$ on $\p B_1$, we get
\[
\int_{\p B_1}|B^{\te}|^{2}\left(r + \frac{1}{2r} - \frac{3}{2}u^{r}\right) = 0,
\]
which implies $B^\te=0$ on $\p B_1$. Combining with the fact that $B^\te$ is self-similar, we have $B^\te \equiv0$.
Thus $\mathbf{B} \equiv 0$ and equations \eqref{MHD} become the Navier--Stokes equations. Since any axisymmetric self-similar solution to the stationary Navier--Stokes equations in $\mathbb{R}^3\setminus \{0\}$ is a Landau solution (see, for example, \cite[Theorem 8.1]{Tsai18}), the proof of Theorem \ref{thm1} is complete.
\end{proof}

The proof of Theorem \ref{thm2} is in the same spirit as that of Theorem \ref{thm1}. 

\begin{proof}[Proof of Theorem \ref{thm2}] 
During the proof, for $0<\rho_1<\rho_2 < \infty$, denote $\Omega_{2}= (B_{\rho_2}\setminus \bar B_{\rho_1}) \cap \mathbb R^3_+$.
Multiplying  \eqref{equationforB} by $rB^\theta$ and integrating over $\Omega_{2}$, we obtain the same identity \eqref{theorem1-1}, with $\Omega_1$ replaced by $\Omega_2$.
 
Integration by parts yields
\begin{align}\label{theorem2-1}
\int_{\Omega_{2}} r|\nb B^{\te}|^{2} + |B^{\te}|^2 \left(\frac{1}{2r} - \frac{3u^{r}}{2} \right)
+\int_{\p \Omega_{2}} -\frac{\p B^\te}{\p \boldsymbol{\nu}} r B^\te
+\frac{|B^\te|^2}{2} ( \e_r \cdot \boldsymbol{\nu} +r \bu \cdot \boldsymbol{\nu})=0.
\end{align}
By the self-similarity of $B^\theta$, the boundary integral simplifies to
\begin{align*}
\int_{\p \Omega_{2}}  -\frac{\p B^{\te}}{\p \boldsymbol{\nu} } r B^{\te} 
+\frac{|B^\te|^2}{2} (\e_r \cdot \boldsymbol{\nu}+r \bu \cdot \boldsymbol{\nu})
=\int_{S}  r B^{\te}\p_z B^\te  +\frac{|B^\te|^2}{2} ( \e_r \cdot \boldsymbol{\nu}+ r \bu \cdot \boldsymbol{\nu}),
\end{align*}
where $S=\{ x \in \p \R^3_+: \r_1 \le |x| \le \r_2 \}$ and $\boldsymbol{\nu}=-\e_z=(0,0,-1)$.
We claim that 
\[
\int_{S}  r B^{\te}\p_z B^\te  +\frac{|B^\te|^2}{2} ( \e_r \cdot \boldsymbol{\nu} + r \bu \cdot \boldsymbol{\nu})=0
\]
under either the no-slip or the Navier slip boundary condition.
First, a direct calculation gives
\[
\curl \mathbf{B} = -\partial_z B^\theta(r,z) \mathbf{e}_r + \frac{1}{r} \partial_r (r B^{\theta}) \mathbf{e}_z.
\]
Since $\curl \mathbf{B} \times \mathbf{e}_z = 0$ on $\partial \mathbb{R}^3_+$ under both boundary conditions, we obtain
\begin{align}\label{theorem2-2}
    \partial_z B^\theta = 0 \quad \text{on } \partial \mathbb{R}^3_+.
\end{align}
For the remaining terms, under the no-slip condition \eqref{noslipbc}, we have $\mathbf{u} = 0$ on $\partial \mathbb{R}^3_+$; under the Navier slip condition \eqref{navierslipbc}, we have $\mathbf{u} \cdot \mathbf{e}_z = 0$.
In both cases, since $\mathbf{e}_r$ lies in the tangent plane of $\partial \mathbb{R}^3_+$, it follows that
\begin{align}\label{theorem2-3}
\int_{S} \frac{|B^\theta|^2}{2} ( \e_r \cdot \boldsymbol{\nu} + r \mathbf{u} \cdot \boldsymbol{\nu}) = 0.
\end{align}
From \eqref{theorem2-2} and \eqref{theorem2-3}, the claim is verified.
Thus \eqref{theorem2-1} becomes
\[
\int_{\Omega_{2}} r|\nb B^{\te}|^{2} + |B^{\te}|^2 \left(\frac{1}{2r} - \frac{3u^{r}}{2} \right)=0.
\]

The remainder of the proof proceeds as in Theorem \ref{thm1}.
Using \eqref{theorem1-3}, the upper bound on $u^r$, and the self-similarity of $B^\theta$ and $u^r$, we conclude that $\mathbf{B} \equiv 0$. The vanishing of $\mathbf{u}$ is then a direct consequence of \cite[Theorem 5.1]{KMT18}, which completes the proof.
\end{proof}

Next, we give the proof of Corollary \ref{cor1}.

\begin{proof}[Proof of Corollary \ref{cor1}]
Note that the minimum of the function $\frac{1}{3r} + \frac{2r}{3}$ over $\p B_1$ is $\frac{2\sqrt{2}}{3}$. 
Consequently, the conclusion of Theorem \ref{thm1} applies, which completes the proof.
\end{proof}

\section*{Acknowledgements}
The author would like to thank Professors Yun Wang and Nicola De Nitti for many helpful discussions.
The work of the author is partially supported by the Postgraduate Research \& Practice Innovation Program of Jiangsu Province via grant KYCX24\_3285.
The author also appreciates the reviewers for their careful reading and helpful comments.

\bibliographystyle{alpha}
\bibliography{ref}

\end{document}